\documentclass[12pt]{amsart}
\usepackage{url}
\usepackage{hyperref}
\usepackage{amsrefs}
\usepackage{latexsym}
\usepackage{tikz}
\usepackage[nocenter]{qtree}
\usepackage{breqn}
\usepackage[edges]{forest}
\usepackage{amsmath,amscd,amssymb}
\usepackage{graphicx}
\usepackage{caption}
\usepackage{subcaption}
\usepackage{float}
\restylefloat{table}
\usepackage{amsmath,amscd,amssymb}
\usepackage{caption}
\usepackage{xcolor}
\usepackage{graphicx,wrapfig,lipsum}
\usepackage{diagbox}
 2

\newtheorem{theorem}{Theorem}[section]
\newtheorem{prop}{Proposition}[section]

\newtheorem*{theorem*}{Theorem}
\newtheorem{definition}[theorem]{Definition}

\newtheorem{remark}[theorem]{Remark}

\numberwithin{equation}{section}

\date{}

\begin{document}

\title[Complete maximal maps and their singularities]{On the Complete maximal maps and
their singularities}

\author[R. Bardhan]{Rivu Bardhan}
\address{Department of Mathematics, Shiv Nadar University, Dadri 201314, Uttar Pradesh, India.}
\email{rb212@snu.edu.in}

\author[I. Biswas]{Indranil Biswas}
\address{Department of Mathematics, Shiv Nadar University, Dadri 201314, Uttar Pradesh, India.}
\email{indranil.biswas@snu.edu.in, inrdranil29@gmail.com}

\author[P. Kumar]{Pradip Kumar}
\address{Department of Mathematics, Shiv Nadar University, Dadri 201314, Uttar Pradesh, India.}
\email{pradip.kumar@snu.edu.in}

\author[S. Paul]{Subham Paul}
\address{Department of Mathematics, Indian Institute of Technology Patna, Bihta 801106, Bihar, India.}
\email{subham\_2021ma25@iitp.ac.in}

\date{}

\subjclass[2020]{53A35}

\keywords{Singularity, maximal map, Lorentz-Minkowski space}

\begin{abstract}
This article investigates the global structure of maximal surfaces (space-like immersions with zero mean 
curvature) in Lorentz-Minkowski $3$-space \(\mathbb{E}^3_1\), especially focusing on the interplay between 
genus, the number 
of singular components—loci, and simple ends. We construct complete maximal maps with arbitrarily many 
singular components for any genus \(p\,\geq\, 0\) with simple ends.
\end{abstract}
\maketitle

\section{Introduction}

The maximal surfaces are defined to be the space-like 
immersions, with zero mean curvature, in the Lorentz Minkowski space $\mathbb{E}_1^3$;
note that this is similar to the definition of minimal surfaces in $\mathbb{R}^3$. These surfaces emerge as 
solution to the problem of locally maximizing the area among the space-like surfaces in $\mathbb{E}_1^3$.

While the maximal surfaces possess some similarities with the minimal surfaces, significant differences
emerge when their global properties are compared. For instance, various examples of complete minimal surfaces
in $\mathbb{R}^3$ exist 
(e.g., the Catenoid, Enneper surface, Costa surfaces, etc.). In contrast, the only complete maximal surface
is the plane.

Thus, to ensure the existence of global non-planar complete maximal surfaces, singularities must be allowed. 
We call a maximal surface with singularities a maximal map. Analogous to minimal immersions in 
\(\mathbb{R}^3\), the maximal map also admits a Weierstrass-Enneper representation \cite{Estudillo1992}, 
\cite{KOBAYASHI1983}. Let \(M\) be a Riemann surface, and let \(g\) be a meromorphic function on
$M$ and \(\omega\) a 
holomorphic one-form on $M$, which together form the Weierstrass data for a maximal map. Then the
singular set of the maximal map is a subset of
 \[
 \{\,p \,\in\, M\ \, :\ \  |g(p)| \,=\, 1\, \;\text{ or }\, \; \omega(p) \,=\, 0\}.
\]

The subset \(\{p \,\in\, M\,\,\big\vert\,\,\,\, |g(p)| \,=\, 1\}\, \subset\, M\) is referred to as the
non-degenerate singular locus. Table \ref{Table of singular components} sheds light on the number of
connected components of the non-degenerate singular locus (referred to as singular components) for various
complete maximal maps.

\begin{table}[ht]
 \centering
\begin{tabular}{|m{1cm}|m{4cm}|m{3cm}|m{2cm}|m{2cm}|m{1cm}|}
\hline
       S.N. &Example of Maximal map & No. of singular components & Genus & Embedded in wider sense & No. of ends\\ \hline
       1 & Lorentzian catenoid & 1 & 0 & Yes & 2\\ \hline 
        2 &Lorentzian helicoid & 1 & 0 & Yes & 2 \\ \hline
         3& Lorentzian Enneper surface & 1 & 0 & No & 1 \\ \hline
        4& Lorentzian Jorge-Meek's surface & 1 & 0 & No & $n$\\ \hline
         5& Maximal map constructed in \cite{SaiPradip0genus} for a given singular curve $C$ & No. of components of $C$ & 0 & No & $n$\\ \hline
        6& Lorentzian Costa Surface (\cite{chen2024singularitiesmaxfacesconstructednodeopening}) & 3 & 1 & Yes & 3\\ \hline
         
        7& Lorentzian Costa-Hoffman-Meeks (CHM) surface (\cite{chen2024singularitiesmaxfacesconstructednodeopening}) ($n_2=m>2$) & $m+1$ & $m-1$ & Yes & 3\\ \hline
       8&  Generalized CHM surface ($n_{L-1}=2L-3$, $L>3$) & $1+ (L-2)(2L-3)$ & $2(L-2)^2$ & Yes & L\\ \hline
    9&Higher Genus Lorentzian Chen-Gackstatter Surfaces \cite{bardhan2023higher} & $p+1$ & $p$ & Yes & $1$\\ \hline
    \end{tabular}
    \caption{Summary of some known complete maximal maps, illustrating the relationship between
the number of singular components and the genus and ends.}
\label{Table of singular components}
\end{table}

From entry 5 of Table~\ref{Table of singular components}, we see that there is no upper bound on the number of 
singular components for a genus-zero complete maximal map (one can increase the number of singular components 
arbitrarily by choosing the singular curve $C$ to have more components in the construction done in
\cite{SaiPradip0genus}).

It was not known, however, whether a similar phenomenon could occur for higher-genus maximal maps. In fact, all known examples to date (see Table~\ref{Table of singular components}) satisfy the heuristic lower bound that a maximal map of genus $p$ has at least $p+1$ singular components.  These observations lead to the following question:
\medskip

\noindent
\textit{ For a given genus $p\,\geq\, 0$, can one construct a complete maximal map that has 
  \begin{enumerate}
  \item  an arbitrarily large number of singular components, and 
  \item all ends are complete?
  \end{enumerate}}
\medskip

For genus \(p \,=\, 0\), the existence of complete maximal maps with arbitrarily many non-degenerate singular 
components and simple ends is already established in \cite{SaiPradip}. Our focus here is on the case \(p \,>\, 
0\), where we seek to maximize the freedom in configuring the non-degenerate singular locus. Crucially, we 
relax regularity constraints by allowing branch singularities (where the differential \(\omega\) vanishes) in 
addition to non-degenerate singularities (\(|g|\, =\, 1\)). This expanded framework --- working with maximal maps 
rather than maxfaces (which permit only non-degenerate singularities) --- unlocks greater topological 
flexibility.  By embracing both singularity types, we systematically explore the full parameter space for 
maximal maps of higher genus.

In Section~\ref{sec:Mainsec}, for given $n\,>\,0$ and $p\,>\,0$, we explore the relationship between $n$
and $p$, such that we have complete maximal maps of genus $p$ whose non-degenerate singular loci have $n$
singular loops.  More precisely main results of this article are as follows:

\begin{enumerate}
\item For given \(p\) and \(N \,>\, p+2\), there exists a complete maximal map of genus \(p\) with \(2N\) complete
ends and at least \(2N\) singular components. (See Theorem \ref{thm:genus,singularity}.)

\item Given \(p\),\, \(m\) and \(N\) such that \(N - \lceil\frac{m}{2}\rceil \,>\, p+2\), there exists a complete
maximal map of genus \(p\) with \(2N\) complete ends, \(2m\) simple ends, and at least \(2N\) singular components.
(See Theorem \ref{thm:simple and Complete ends}.)
\end{enumerate} 

We use the Weierstrass–Enneper representation to explicitly build such maximal maps and solve the 
corresponding period problem (see \eqref{period condition}).  To solve the period problem, various methods are 
available, which are modifications of methods used in the context of minimal surfaces. Below we briefly recall
these methods and explain the challenges while adopting these in the case of maximal maps.

Weber and Wolf in \cite{weber2002teichmuller} introduced orthodisks and, using calculus in Teichm\"uller space, 
proved the existence of reflexive orthodisks of any genus. Each reflexive orthodisk of genus \(p\) gives rise to a 
minimal immersion of genus \(p\). For maximal maps, this method (along with some modifications) was effective in 
generating complete maximal maps of genus \(p\) \cite{bardhan2023higher}. However, it produces at most \(p+1\) 
singular components for genus \(p\) maximal maps.

Another method proposed by Traizet, \cite{traizet2002embedded}, for the construction of minimal surfaces can also be 
modified to construct maximal maps \cite{chen2024singularitiesmaxfacesconstructednodeopening}. 
However, this method also results in maximal maps of genus \(p\) with \(p+1\) singular components. There are a few 
other methods, such as using symmetry to solve the period problem, which were explored by Meeks 
\cite{Meeks1989} and followed up, in the case of maximal maps, by the authors of \cite{Kim2006}, 
\cite{Fujimori2009}.    

Apart from Traizet's strategy in \cite{traizet2002embedded} for removing nodes, in other methods, many times, we get 
only a maximal map (with branch singularities) and not the maxface.  Further, all of these approaches primarily 
focus on the construction of embedded (at least in a wider sense) surfaces, and thus, using these methods, we 
cannot expect to generate surfaces with a large number of singular components.

With this in mind, to address the problem of constructing a maximal map of genus \(p\) with complete and/or 
embedded ends and any number of singular components, we follow a very classical approach, which is the
content of Section \ref{sec:Mainsec}. The complete construction is presented in Section \ref{sec:Mainsec}, 

The article is organized as follows: In Section \ref{sec:prelim} we recall some basic facts about maximal maps.  In Section \ref{sec:Mainsec} we prove Theorems 
\ref{thm:genus,singularity} and \ref{thm:simple and Complete ends}.

\section{Preliminaries}\label{sec:prelim}

By $\mathbb{E}_1^3$ we denote the real vector space $\mathbb{R}^3$ equipped with the Lorentzian metric 
$\mathrm{d}x^2+\mathrm{d}y^2-\mathrm{d}z^2$. We begin by recalling the Weierstrass–Enneper representation for 
maximal maps.

\smallskip 

\noindent \textbf{Weierstrass–Enneper representation of a maximal
map} (cf. \cite{KOBAYASHI1983}, \cite{Estudillo1992}): \; Let $M$ be a connected Riemann surface, $g$ a
meromorphic function on $M$, and $\omega$ a holomorphic 1-form on $M$, such that both $g\,\omega$ and
$g^{-1}\omega$ are actually holomorphic 1-forms on $M$. These data $(M,\,g,\,\omega)$ are the Weierstrass
data for a maximal map; we assume that they satisfy the following two conditions:
\begin{enumerate}
\item $|g(p)|\,\not\equiv\, 1$ on $M$, 
\item For every closed loop $\gamma$ in $M$, 
\begin{equation}\label{period condition}
\int_{\gamma} g\,\omega + \overline{\int_{\gamma} g^{-1}\omega} \;=\; 0, \qquad \Re\! \int_{\gamma} \omega \;=\; 0
\end{equation}
(which are the period conditions).
\end{enumerate} 
Given such data, one constructs a map $X\,:\, M\, \longrightarrow\, \mathbb{E}_1^3$ by 
\begin{equation}\label{maximal_map}
X(z)\ \;=\ \; \mathrm{Re} \int_{z_0}^{\,z} \Big(\frac{1}{2}(g^{-1}+g),\; \frac{i}{2}(g^{-1}-g),\; 1\Big)\,\omega 
\end{equation}
for some base point $z_0\,\in\, M$. This $X$ is a (branched) maximal immersion $M \,\longrightarrow\,
\mathbb{E}_1^3$; conversely, any (branched) maximal map can be represented in this form. The pair
$(g,\,\omega)$ is called the \emph{Weierstrass data} of the maximal map $X$.

The first fundamental form (induced metric) on $M$ for the immersion $X$ is 
\begin{equation}\label{ds}
ds^2 \;=\; \frac{1}{4}\left(|g|^{-1}-|g|\right)^2 |\omega|^2~.
\end{equation}
The \emph{singular set} of the maximal map $X$ (with data $(M,\,g,\,\omega)$) is 
\[
\{p\,\in\, M\,\, \big\vert\,\,\, |g(p)|\,=\,1\,\,  \text{ or }\,\, \omega(p)\,=\,0\}\ \subset\ M.
\] 
We focus on the \emph{non-degenerate} case, so we define 
\begin{equation}\label{eq:singset}
\Sigma\ \;:=\ \; \{\,p\,\in\, M\,\, \big\vert\,\,\, |g(p)|\,=\,1\}\ \subset\ M,
\end{equation}
and refer to the points of $\Sigma$ as \emph{non-degenerate singularities}. The connected components of
$\Sigma$ are called \textbf{singular components}, and if a singular component is a closed curve in $M$, we call it a \textbf{singular loop}.    For an \emph{unbranched} maximal map (maxface) as in \cite{UMEHARA2006}, we use the following definition of completeness.

\begin{definition}[{\cite[Definition~4.1]{UMEHARA2006}}]\label{defn:complete}
A maximal map $X\,:\, M\,\longrightarrow\, \mathbb{E}_1^3$ is called \emph{complete} if there exist a compact
subset $C \,\subset\, M$ and a symmetric $2$-tensor $T$ on $M$ such that $T\,\equiv\, 0$ on $M\setminus C$
and $ds^2+T$ is a complete Riemannian metric on $M$, where $ds^2$ is the (possibly degenerate) induced
metric in \eqref{ds}.
\end{definition}

Umehara and Yamada \cite{UMEHARA2006} also give a  criterion for checking completeness:

\begin{prop}[{\cite[Corollary~4.8]{UMEHARA2006}}]\label{defn:complete-criteria}
The following three conditions are sufficient to ensure that $X\,:\, M \,\longrightarrow\, \mathbb{E}_1^3$
is complete in the sense of Definition~\ref{defn:complete}:
\begin{enumerate}
\item $M$ is biholomorphic to $\overline{M}\setminus\{p_1,\,\cdots,\,p_n\}$ for some compact Riemann surface
$\overline{M}$ (the \emph{compactification} of $M$) and distinct points $p_1,\,\cdots,\,p_n\,\in\,\overline{M}$.
\item $|g(p_i)|\,\neq\, 1$ at each puncture $p_i$.
\item The induced metric $ds^2$ given by \eqref{ds} is complete at each puncture $p_i$.
\end{enumerate}
\end{prop}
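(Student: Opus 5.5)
The plan is to build the compact set $C$ and the tensor $T$ of Definition~\ref{defn:complete} by hand. The idea is to isolate each puncture $p_i$ in a small punctured disk on which $ds^2$ is already a genuine Riemannian metric, and then patch in an auxiliary metric on the compact remainder. Throughout I assume, as the statement's condition (2) implicitly does, that $g$ and $\omega$ extend meromorphically to $\overline{M}$. This is needed for $g(p_i)$ to make sense, and it gives $|g(p_i)|\in[0,\infty]$.

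\textbf{Step 1 (good neighbourhoods of the ends).} Fix a local coordinate $z$ centred at $p_i$.
\begin{itemize}
\item By condition (2) and continuity of $|g|$ as a map to $[0,\infty]$, there is a closed disk $\overline{D_i}=\{|z|\le r_i\}$ on which $|g|\neq 1$ away from $p_i$.
\item Since $\omega$ is meromorphic at $p_i$, its zeros in $M$ cannot accumulate at $p_i$, so after shrinking $r_i$ the form $\omega$ has no zeros on $\overline{D_i}\setminus\{p_i\}$.
\item Since $g\omega$ and $g^{-1}\omega$ are holomorphic on $M$, a zero or pole of $g$ at a point of $M$ forces a compensating behaviour of $\omega$. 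One checks locally that $(|g|^{-1}-|g|)^2|\omega|^2$ is then still strictly positive at such points. After shrinking, $\overline{D_i}$ contains no such points anyway.
\end{itemize}
We also take the disks pairwise disjoint. Hence $ds^2$ from \eqref{ds} is a smooth, positive definite Riemannian metric on each punctured disk $D_i^*=\overline{D_i}\setminus\{p_i\}$. By condition (3), $ds^2$ is complete at $p_i$: every path in $D_i^*$ tending to $p_i$ has infinite $ds^2$-length.

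\textbf{Step 2 (construction of $C$ and $T$).} Let $C=\overline{M}\setminus\bigcup_i\{|z|<r_i/2\}$, which is a compact subset of $M$. Let $C'=\overline{M}\setminus\bigcup_i \overline{D_i}$, so $C'\subset C$. Choose any smooth Riemannian metric $h$ on $M$ and a smooth cutoff $\varphi\colon M\to[0,1]$ with $\varphi\equiv1$ on $C'$ and $\operatorname{supp}\varphi\subset C$. Put $T=\varphi\,h$, so $T\equiv0$ on $M\setminus C$. Then $ds^2+T$ is positive definite everywhere, because $ds^2\ge0$ is positive semidefinite on all of $M$:
\begin{itemize}
\item on $C'$ the term $h$ is positive definite;
\item on $M\setminus C'=\bigcup_i D_i^*$ the term $ds^2$ is positive definite by Step 1.
\end{itemize}

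\textbf{Step 3 (completeness).} Let $\gamma\colon[0,1)\to M$ be a divergent path, meaning it leaves every compact subset of $M$. I would show $\gamma$ has infinite $(ds^2+T)$-length, which by Hopf--Rinow gives completeness. Since $\overline{M}$ is compact, the points $\gamma(t)$ accumulate only at punctures. Suppose $\gamma$ returns infinitely often to the region $|z|\ge r_i/2$ after entering $\{|z|<r_i/4\}$. Then it crosses the annulus $r_i/4\le|z|\le r_i/2$ infinitely often. This annulus is compact and $ds^2+T$ is a genuine metric on it, so each crossing costs at least a fixed $\delta>0$, and the length is infinite. Otherwise $\gamma$ eventually stays in some $\{|z|<r_i/2\}\setminus\{p_i\}$ and tends to $p_i$. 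There $T=0$, so the length equals the $ds^2$-length, which is infinite by condition (3).

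\textbf{Main obstacle.} The only delicate point is Step 1, namely ensuring that near each puncture $ds^2$ has no degeneracy at all. This is exactly where conditions (2) and (3) enter. Condition (2) rules out singular curves $\{|g|=1\}$ running into the end; such curves would force $T\neq0$ arbitrarily close to $p_i$, and no compact $C$ would work. Meromorphic extension of $\omega$ and $g$ rules out accumulation of branch points. The rest is the standard cutoff-and-Hopf--Rinow argument.
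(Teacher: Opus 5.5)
The paper does not prove this proposition; it quotes \cite[Corollary~4.8]{UMEHARA2006}, whose setting is maxfaces (no branch points). Your argument is a self-contained direct proof of sufficiency by the standard cutoff construction, and it is correct under its stated assumption. The tensor $ds^2=\frac14\bigl(|g^{-1}\omega|^2-2|\omega|^2+|g\omega|^2\bigr)$ is smooth and positive semidefinite on all of $M$; say so explicitly, since you need $ds^2+T$ to be smooth. It is positive definite on each $D_i^*$ by Step~1, and $T=\varphi h$ repairs the compact part. On $\{0<|z|<r_i/2\}$ the length of a divergent tail equals its $ds^2$-length, which is infinite by (3).

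The one thing to fix is your justification for assuming that $\omega$ extends meromorphically. Condition (2) forces only $g$ to extend: a meromorphic function on a punctured disk that extends continuously into $\mathbb{C}\cup\{\infty\}$ has a removable singularity or a pole there. It says nothing about $\omega$.

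In the unbranched setting of the citation you do not need $\omega$ to extend. At a zero of $\omega$ in $M$, one of $g\omega$, $g^{-1}\omega$ is nonzero. Hence $g=\omega/(g^{-1}\omega)$ or $g^{-1}=\omega/(g\omega)$ vanishes there to the same order as $\omega$, and such points cannot accumulate at $p_i$ once $g$ extends. This unbranched case is also the only one in which the positivity claim in your third bullet is true; at a branch point $ds^2=0$. The bullet does no harm, though, because zeros and poles of $g$ in $M$ are zeros of $\omega$ and your second bullet already excludes them.

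For branched maximal maps the hypothesis is essential, and the proposition as literally stated fails without it. On $M=(\mathbb{C}\cup\{\infty\})\setminus\{0\}$ take $g\equiv\frac12$ and $\omega=d\bigl(\cos(1/z)\bigr)$. Then:
\begin{itemize}
\item all periods vanish, and $|g(0)|\neq 1$;
\item $ds^2=\frac{9}{16}|dw|^2$ with $w=\cos(1/z)$ is complete at $0$: along a path of finite length $w$ converges, so $1/z$ stays in a bounded component of the preimage under $\cos$ of a small disk;
\item yet $ds^2$ vanishes at the zeros $z=1/(k\pi)$ of $\omega$, which accumulate at $0$, so no admissible $C$ and $T$ exist.
\end{itemize}
So either state meromorphy of $\omega$ at the punctures as an explicit hypothesis (it holds in all of the paper's applications), or restrict to maxfaces and use the order-matching argument above.
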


For each puncture $p_i\,\in\, \overline{M}$ (equivalently, each end of $M$), a neighborhood of $p_i$
in $M$ is mapped by $X$ to an \emph{end} of the maximal surface $X(M)$. Furthermore, if an end $p_i$ is
complete (condition (3) above), then the Weierstrass data $(g,\,\omega)$ extend meromorphically to $p_i$. In
that case, near $p_i$ one can take a local holomorphic coordinate $z$ centered at $p_i$ such that 
\[ 
(|g|^{-1}-|g|)^2\,|\omega|^2\ \;=\ \; \frac{1}{|z|^{2m}} +\ \text{(higher order terms)}
\] 
for some integer ${m\,\ge\, 1}$. (In particular, $m$ is the order of the pole of $ds^2$ at
the end $p_i$.)  Our maximal map will have simple ends.

Following Imaizumi--Kato \cite{IK}, an end $p_i$ is said to be \textit{simple} if the
two conditions below hold:
\begin{enumerate}
\item The Weierstrass data $(g,\,\omega)$ extend meromorphically across $p_i$, and 
\item poles of $g^{-1}\omega$,\, $g\,\omega$ and $\omega$ at $p_i$ are of order at most $2$. 
\end{enumerate}

In the next section, we turn to the problem of constructing examples of maximal maps with an arbitrarily large 
number of singular components.

\section{Complete maximal map of any genus and prescribed Gauss map}\label{sec:Mainsec}

Constructing a maximal map of genus \(p\) involves first constructing a genus-zero maximal map. The 
plan is to start with a genus-zero maximal map and then add handles to it. Below, we begin with the genus zero 
case as discussed by the third author with S. Mohanty in \cite{SaiPradip0genus}.

Let \( g \) be a meromorphic function on \( \mathbb{C} \cup \{\infty\} \) such that
\begin{equation}\label{gaussmap1}
(g)_{\infty}\ =\ \sum_{i=1}^{n} x_i\; p_i,\ \quad x_i \,\in\, \mathbb{N}^*, \quad 1 \,\leq\, i \,\leq\, n.
\end{equation}
Without loss of any generality, we assume that \( p_n \,=\, \infty \). Define \[ M_{0,n} \ :=\ \mathbb{C} \cup
\{\infty\} \setminus \{p_i\}_{i=1}^n .\] We start by outlining the setup.

\subsection{Required maximal map}\label{required maximal map setup}

For a given integer \( p \,\geq\, 1 \), the goal is to add handles to \( M_{0,n} \) to obtain a
genus \( p \) Riemann surface \( M_{p,2n} \) with \(2n\) punctures, and then construct a suitable holomorphic
one-form \(\Omega\) and a meromorphic function \(G\) on \(M_{p,2n}\) such that the following three statements
hold:
\begin{enumerate} 
\item The triple \((M_{p, 2n},\, G, \,\Omega)\) serves as the Weierstrass data for a complete maximal map
of genus $p$.

\item The singular components are at least \(2N\). Here, \(N\) is the cardinality
of the connected components of the subset \(\{z\,\in\, \mathbb{C} \cup \{\infty\} \,\,\big\vert\,\,\,
 |g(z)| \,=\, 1\}\).

\item Ensure that \(2N \,>\, p+1\) for a given \(p\).
\end{enumerate}

\subsection{Genus zero complete maximal maps}\label{subsec: genus zero}

Let \( a \,=\, (a_i) \,\in\, \mathbb{C}^{n-1} \) and \( b \,=\, (b_j) \,\in\, \mathbb{C}^{2n-1} \) be the
variables. Define
\begin{equation}\label{eq 4.1:initial f}
 f(z)\ =\ \sum_{i=1}^{n-1} \frac{a_i}{(z - p_i)^2} + \sum_{j=0}^{2n-2} b_j z^j.
\end{equation}

We want \(\left(g,\, \; \omega \,=\, g f(z) \, dz \right)\) to serve as the data for a maximal map on
\(M_{0,n}\). To achieve this, it is needed to solve for \((a,\, b)\) the following equations. For each
\(\gamma_i\), a loop surrounding \(p_i\), the following are required:
\begin{align*}
&\int_{\gamma_i} g \omega \,=\, \int_{\gamma_i} g^2 f(z) \, dz \,=\, 0,\\  
&\int_{\gamma_i} g^{-1} \omega \,=\, \int_{\gamma_i} f(z) \, dz \,=\, 0,\\ 
&\int_{\gamma_i} \omega \,=\, \int_{\gamma_i} g f(z) \, dz \,=\, 0.
\end{align*}

Since \(\int_{\gamma_i} g^{-1} \omega \,=\, 0\) for every \((a,\, b)\), we get  the following system of equations
in the variables \((a,\, b)\):
\begin{align}
  \label{eq1:0 genus period problem}  
  \operatorname{Res}_{p_i}(g f \, dz) &\,=\, 0,\\
  \label{eqn1:0 genus Period Problem secton}  
  \operatorname{Res}_{p_i}(g^2 f \, dz) &\,=\, 0.
\end{align}
This system will be referred to as System \(A\). Observe that this System A has \(2n - 2\) equations and \(3n - 
2\) variables. Denote the solution space of \(A\) by \(\text{Sol}_{(a,b)}\), which has dimension at least 
\(n\). Therefore, there is a non-trivial solution of the system of linear equations \(A\), and hence, a maximal 
map exists on \(M_{0,n}\). Moreover, since the dimension of the solution space is at least \(n\), and at least one 
\(b_j \,\neq\, 0\), it follows that this maximal map is complete at \(p_n\,=\,\infty\).

To get the maximal map, which is complete at other ends \(p_i\), $i\,\neq\, n$, too,  we replace \(f\) with \(f + \sum_{i=1}^{n-1} 
f_i\), where
\begin{equation}\label{equation:completeness argument}
f_i(z)\ =\ \sum_{k=2}^{2n+2} \frac{c_k^i}{(z - p_i)^k},\ \quad{c_k^i \,\in\, \mathbb{C}}.
\end{equation}

To achieve completeness at the ends, at least one non-zero coefficient \(c_k^i\) is required. Thus, to obtain a
complete maximal map on \(M_{0,n}\) with Weierstrass data \(\left(g, \; \omega \,=\,
g\big(f(z) + \sum_{i=1}^{n-1} f_i(z)\big)\, dz\right)\), it is needed to find a non-trivial solution
of the following linear
equations for each \(f_i\) (in addition to \eqref{eq1:0 genus period problem} and
\eqref{eqn1:0 genus Period Problem secton}):
\begin{align}
\label{eq 4.5:completeness 1}
\operatorname{Res}_{p_j}(g f_i \, dz) &\ =\ 0,\\
\label{eq 4.6:completeness 2}
\operatorname{Res}_{p_j}(g^2 f_i \, dz) &\ =\ 0,
\end{align}
where $j\,=\,1,\,2,\,\cdots,\,n-1$. Denote this system of equations by \(B_i\) for every
\(i \,=\, 1,\, 2,\, \cdots,\, n-1\).

We now have a homogeneous system of linear equations \(A\),\, \(B_1,\, \cdots,\, B_{n-1}\) with the variables
$$\{a \,=\, (a_i)_{1 \leq i \leq n-1},\ \; b \,=\, (b_j)_{0 \leq j \leq 2n-2},\ \; c \,=
\,(c_k^i)_{2 \leq k \leq 2n+2, \; 1 \leq i \leq n-1}\}.$$
The system \(B_i\) has \(2n+1\) variables and \(2n-2\) equations.
Denote the space of solutions of \(B_i\) by \(\text{Sol}_{c_i}\). The dimension of $\text{Sol}_{c_i}$ is at least $3$. Therefore, for each \(i\), at least one of the \(c_k^i\) is non-zero.

For each \(s_{(a,b)} \,\in\, \text{Sol}_{(a,b)}\) and \(s_{c_i} \,\in\, \text{Sol}_{c_i}\), there exists a complete 
maximal map of genus zero with Weierstrass data
$$(M_{0,n} =\ \mathbb{C} \cup
\{\infty\} \setminus \{p_i\}_{i=1}^n,\,\, \; g,\,\, \;
g (f(z) + \sum_{i=1}^{n-1} f_i(z)) \, dz).$$

Denote the space $\text{Sol}_{(a,b)}\times\prod_{i=1}^{n-1}\text{Sol}_{c^i}$ by $\text{Sol}_{(a,b,c)}$. Since
$\text{Sol}_{(a,b)}$ is of at least dimension $n$, and the dimension of each $\text{Sol}_{c^i}$ is at least $3$,
it follows that the solution space $\text{Sol}_{(a,b,c)}$ is a complex linear space of dimension at least $4n-3.$ 

Given that we have already obtained a genus \(0\) complete maximal map, the next natural step is to explore 
whether this map can be extended to a genus \(p\) complete maximal map, while ensuring that, when ``restricted'' to 
\(M_{0,n}\), the original ``maximal map is preserved'' (we shall explain this). The remaining part of this section 
focuses on addressing this question.

\subsection{Adding handles}\label{subsec:Adding handels}
We choose a disk $B$ on $\mathbb{C}\cup\{\infty\}$ such that
it contains both \( g^{-1}(S^1) \) and \(\{p_i\,\,\big\vert\,\,\, i\,=\,1, \,\cdots,\, n\}\).

Choose a set of distinct complex numbers \( z_1,\, \cdots,\, z_{2p+2} \) from \( \mathbb{C} \cup \{\infty\}
\setminus B \), such that the line segments \( C_i\, =\, \overline{z_i z_{i+1}} \), where
\( i\,=\,1,\, \cdots,\, 2p+1 \), lie outside \( B \) and do not intersect each other. Next, we cut slits along the
segments \( C_{2j+1} \) for \( j\,=\,0,\, \cdots,\, p \) and construct a hyperelliptic cover of
\( \mathbb{C} \cup \{\infty\} \), branched at the marked points \( z_1,\, \cdots,\, z_{2p+2} \). This process
results in a compact Riemann surface \( M_p \) of genus \( p \). There is a natural projection map
\begin{equation}\label{epi}
\pi\ :\ M_p \ \longrightarrow\ \mathbb{C} \cup \{\infty\}.
\end{equation}

Further, let \( E \) denote the preimage, under the map $\pi$ in \eqref{epi}, of the points
\( \{p_i \,\,\big\vert\,\, i\,=\,1,\, \cdots,\, n\} \). The cardinality of \( E \) is \( 2n \). Denote
\( M_{p, 2n} \ :=\ M_p \setminus E \).

Recall that for any \(0\, \neq\, \alpha\, \in\, \text{Sol}_{(a,b,c)}\), 
$$\left(M_{0,n} =\ \mathbb{C} \cup
\{\infty\} \setminus \{p_i\}_{i=1}^n,\,\;\; g,\, \;\;\; \omega
\,:=\, g(f(z) + \sum_{i=1}^{n-1} f_i(z)) \, dz\right)$$
is the Weierstrass data. We have three $1$-forms:
\begin{align}
  &\omega_1 \,=\, g \omega \,=\, g(z)^2 \left(f(z) + \sum_i f_i(z)\right) \, dz, \label{en:starting equation for omega1} \\
  &\omega_2 \,=\, g^{-1} \omega \,=\, \left(f(z) + \sum_i f_i(z)\right) \, dz, \label{en:starting equation for omega2} \\
  &\omega_3 \,=\, \omega \,=\, g(z) \left(f(z) + \sum_i f_i(z)\right) \, dz. \label{en:starting equation for omega3}
\end{align}
The pullback of these three 1-forms yield three holomorphic forms on \(M_{p, 2n}\), which we
denote by \(\widetilde{\omega}_1\),\, \(\widetilde{\omega}_2\) and \(\widetilde{\omega}_3\) respectively.
Note that $\widetilde{\omega_i}$  have poles only at \(E\,\subset\, M_{p}\).
Moreover these are defined for each $0\, \neq\, \alpha \,\in\, \text{Sol}_{(a,b,c)}$. 

We claim that there exists a non-trivial subspace \[V\ \subset\ \text{Sol}_{(a,b,c)}\] such that for each
\(0\, \neq\, \alpha \,\in\, \text{Sol}_{(a,b,c)}\), the data
\begin{equation}\label{eg}
\left(M_{p, 2n}, \;\; G \,:=\, \frac{\widetilde{\omega}_3}{\widetilde{\omega}_2}, \;\; \Omega
\,:=\, \widetilde{\omega}_3\right)
\end{equation}
define a maximal map as described in Section \ref{required maximal map setup}. This 
claim will be proved in steps by solving the period problem.

\subsubsection{Period Problem}\label{subsub:Period}

We begin with the basis of \(H_1(M_{p,2n},\, \mathbb{Z})\), which contains the following loops:

\begin{enumerate}
\item For each \(i\), the loops \(\Gamma_i^{1}\) and \(\Gamma_i^{2}\) are the lifts of the loop \(\gamma_i\)
in \(B \subset M_{0,n}\) that enclose \(p_i\) and no other \(p_j\) for \(j \,\neq\, i\). The loops
\(\Gamma_i^{1}\) and \(\Gamma_i^{2}\) enclose the inverse images of \(p_i\). 

\item The remaining \(2p\) loops are as follows: Choose \(\beta_i\), where \(i\, =\, 1,\, \cdots, \,p\), on 
\(M_{0,n}\) such that each \(\beta_i\) is single-sheeted and encloses only \(C_{2i+1}\) in its interior. Let 
\(\delta_j\), where \(j\,=\,1,\,\cdots,\, p\), be a two-sheeted loop that starts at \(C_{2j}\), ends at
\(C_{2j+1}\) on  one sheet, then starts again at \(C_{2j+1}\) and returns to \(C_{2j}\) on the other sheet.
\end{enumerate} 

We wish to establish the existence of a subspace \(V\ \subset\ \text{Sol}_{(a,b,c)}\) such that for each non-zero
\(\alpha \,\in\, V\), the pair \((G,\, \Omega)\) (see \eqref{eg}) solves the period conditions. For this we need
to solve the following equations. For all basis \(\gamma\,\in\, H^1(M_{p,2n},\,\mathbb Z)\):

\begin{align}
\int_\gamma \widetilde{\omega}_1 + \overline{\int_\gamma \widetilde{\omega}_2} &\ =\ 0, \\
\text{Re} \int_\gamma \widetilde{\omega}_3 &\ =\ 0.
\end{align}

By the definition of $\text{Sol}_{(a,b,c)}$,  for each $\alpha\,\in\, \text{Sol}_{(a,b,c)}$, we have 
\[\int_{\Gamma_i^{1,2}}\widetilde{\omega}_j=0.\]

Moreover, by construction, the forms $\omega_1,\,\omega_2,\,\omega_3$ are holomorphic (and exact)  on
$\mathbb{C}\cup\{\infty\}\setminus B$. Therefore, for single-sheeted loops \(\beta_i\), we must have
$$\int_{\beta_i} \widetilde{\omega}_j \ =\ 0$$
for all \(i\,=\,1,\,\cdots,\,p\) and \(j\,=\,1,\,2,\,3\).

Thus, the remaining task is to find a non-trivial solution space \( V \,\subset\, \text{Sol}_{(a,b,c)} \) such 
that \( V \) is the kernel of the map
\[
F\ :\ \text{Sol}_{(a,b,c)} \ \longrightarrow\ \mathbb{C}^{2p}
\]
defined by
\[
\alpha\ \longmapsto\ \left(\int_{\delta_j}\widetilde{\omega}_1 + \overline{\int_{\delta_j}\widetilde{\omega}_2},\;\;\; \text{Re}\int_{\delta_j} \widetilde{\omega}_3\right)_{j=1,\ldots, p}.
\]

Since \(\widetilde{\omega}_i\) and \(\omega_i\) have no poles around each Weierstrass point or on the line
segments \(C_k 
\,=\, \overline{z_k z_{k+1}}\), we have
\[
\int_{\delta_j}\widetilde{\omega}_i\ =\ 2 \int_{z_{2j}}^{z_{2j+1}} \omega_i.
\]
Therefore, it follows that
\[
F(\alpha)\ =\ 2 \left(\int_{z_{2j}}^{z_{2j+1}} \omega_1 + \overline{\int_{z_{2j}}^{z_{2j+1}} \omega_2},
\;\;\; \text{Re} \int_{z_{2j}}^{z_{2j+1}} \omega_3 \right)_{j=1,\ldots, p}.
\]

Since \(F\) is a linear map from \(\text{Sol}_{(a,b,c)}\), if \(4n-3 \,>\, 2p\) (the dimension of 
\(\text{Sol}_{(a,b,c)}\) is at least \(4n-3\)), then there is a non-trivial solution \(\alpha \,\in\, V \,=\, 
\text{Ker}(F)\) for which the period problem is solved. The dimension of \(V\) is at least \(4n-2p-3\).

\subsubsection{Conformal Condition}\label{subsub:Conform}

Away from the Weierstrass points, in a coordinate representation, we have
\[
\widetilde{\omega}_1 \widetilde{\omega}_2 - \widetilde{\omega}_3^2\ =\ \omega_1 \omega_2 - \omega_3^2\ =\ 0.
\]
Since \(M_{p, 2n}\) is connected, it follows that \(\widetilde{\omega}_1 \widetilde{\omega}_2 - \widetilde{\omega}_3^2\, \equiv\, 0\).

Subsections \ref{subsub:Period} and \ref{subsub:Conform} show that the triple \[(M_{p, 2n},\, \; G \,:=\,
\frac{\widetilde{\omega}_3}{\widetilde{\omega}_2},\, \; \Omega \,:=\, \widetilde{\omega}_3)\] defines a maximal map. Now, all \(2n\) ends corresponding to the points in \(E\) are complete.
Indeed, this can be seen from the following two facts:

\begin{enumerate}
\item The existence of a non-zero solution of the system \(B_i\) ensures that the order of the pole at
each point of \(E\) is at least \(2\).

\item At all end points, the Gauss map \(G\) has poles, hence the singularity set is compact.
\end{enumerate}

Summarizing all the above, we have the following:

\begin{prop}\label{prop:exsistece of maximal map}
Given a meromorphic map \(g\,:\, \mathbb{C} \cup \{\infty\}\, \longrightarrow\, \mathbb{C}\) with \(n\) poles, 
if \( 4n-2p-3 \,> \, 0\), then there exists a complete maximal map of genus \(p\), defined on a hyperelliptic cover 
of \(\mathbb{C} \cup \{\infty\}\), such that it has \(2n\) complete ends and its Gauss map \(G\) is the lift of 
\(g\).
\end{prop}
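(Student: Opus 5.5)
The proof collects the construction of Subsections \ref{subsec: genus zero} and \ref{subsec:Adding handels} into a single argument, in four steps.

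\textbf{Step 1: genus zero data.} Fix $g$ with pole divisor \eqref{gaussmap1}, and after a M\"obius change of coordinate put $p_n=\infty$. Form $f$ as in \eqref{eq 4.1:initial f} and the correction terms $f_i$ as in \eqref{equation:completeness argument}. System $A$ has $2n-2$ homogeneous linear equations in $3n-2$ unknowns, so $\dim \text{Sol}_{(a,b)}\geq n$. Each system $B_i$ has $2n-2$ equations in $2n+1$ unknowns, so $\dim\text{Sol}_{c^i}\geq 3$. Hence $\dim \text{Sol}_{(a,b,c)}\geq 4n-3$. Every $\alpha$ in this space makes the residues of $\omega_1,\omega_2,\omega_3$ vanish at every $p_i$. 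The residue at $p_n$ vanishes automatically by the residue theorem.

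\textbf{Step 2: the handles and the period map.} Choose the disk $B$ and the branch points $z_1,\dots,z_{2p+2}$ outside $B$, and pass to the hyperelliptic cover $\pi:M_p\to\mathbb{C}\cup\{\infty\}$ of \eqref{epi}. Pull back $\omega_1,\omega_2,\omega_3$ to obtain $\widetilde\omega_j$ on $M_{p,2n}$. Then check the period conditions on the homology basis of Subsection \ref{subsub:Period}:
\begin{itemize}
\item on the lifts $\Gamma_i^{1,2}$ the periods vanish by Step 1;
\item on the single-sheeted $\beta_i$ they vanish because each $\omega_j$ is holomorphic outside $B$, so it has a primitive there;
\item on each $\delta_j$ the period equals $2\int_{z_{2j}}^{z_{2j+1}}\omega_j$, and this is $\mathbb{C}$-linear in $\alpha$.
\end{itemize}
The remaining conditions therefore form the map $F:\text{Sol}_{(a,b,c)}\to\mathbb{C}^{2p}$. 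Its components are $\int\omega_1+\overline{\int\omega_2}$ and $\mathrm{Re}\int\omega_3$, so $F$ is only $\mathbb{R}$-linear. The honest count is then $8n-6$ real dimensions against at most $4p+p$ real constraints. The hypothesis $4n-2p-3>0$ is exactly what the paper's complex count requires, so I would present the count as in the text and note that the real count also leaves a nonzero kernel. Pick $0\neq\alpha\in\ker F$.

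\textbf{Step 3: Weierstrass data.} Set $G=\widetilde\omega_3/\widetilde\omega_2$ and $\Omega=\widetilde\omega_3$. Then $G\Omega=\widetilde\omega_1$ follows from the identity $\omega_1\omega_2=\omega_3^2$ pulled back from $\mathbb{C}\cup\{\infty\}$. We also have $G^{-1}\Omega=\widetilde\omega_2$ and $G=g\circ\pi$, so $G$ is the lift of $g$. Consequently $G\Omega$, $G^{-1}\Omega$ and $\Omega$ are holomorphic on $M_{p,2n}$, the periods vanish, and $|G|\not\equiv1$.

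\textbf{Step 4: completeness, the main obstacle.} I would apply Proposition \ref{defn:complete-criteria}.
\begin{itemize}
\item $M_{p,2n}=M_p\setminus E$ with $M_p$ compact.
\item $G$ has a pole at every point of $E$, so $|G|\neq1$ there.
\item The metric must be complete at each end. Near a point of $E$ we have $ds^2\sim|G|^2|\widetilde\omega_2|^2$, and $\widetilde\omega_2$ has a pole of order at least $2$ there provided some $c^i_k\neq0$ (for $p_n=\infty$, provided some $b_j\neq0$). The metric then has a pole of order at least $2$ and is complete.
\end{itemize}
The delicate point is that $\ker F$ might force all the $c^i$, or all the $b_j$, to vanish. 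To avoid this I would use a genericity argument. The set of $\alpha$ with $c^i=0$ for a given $i$ is a proper subspace, because $\dim\text{Sol}_{c^i}\geq3$, and the set with all $b_j=0$ is likewise proper. I would check that $\ker F$ is not contained in any of these finitely many subspaces, perturbing the position of the slits if necessary, and then take $\alpha$ outside their union.
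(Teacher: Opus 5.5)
Your outline is the paper's own argument. You are right that the completeness claim silently needs the chosen $\alpha$ to have $b\neq 0$ and every $c^i\neq 0$, and nothing in the text guarantees this for $\alpha\in\ker F$. But your Step 4 does not close this either. ``I would check that $\ker F$ is not contained in any of these subspaces, perturbing the slits if necessary'' is an unproved assertion. For a nonzero $F$ of the form you describe it cannot come from counting alone: $\{c^i=0\}\cap\text{Sol}_{(a,b,c)}$ has real codimension $2\dim_{\mathbb C}\text{Sol}_{c^i}$ (possibly just $6$) in $\text{Sol}_{(a,b,c)}$, while $\ker F$ may have real codimension up to $3p$.

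The missing idea is that there is nothing to check, because the third bullet of your Step 2 (taken from the text) is false. The forms $\widetilde\omega_j=\pi^*\omega_j$ are pull-backs. So for every closed curve $\gamma$ in $M_{p,2n}$ one has $\int_\gamma\widetilde\omega_j=\int_{\pi\circ\gamma}\omega_j$. This is zero: for $\alpha\in\text{Sol}_{(a,b,c)}$ each $\omega_j$ has vanishing residue at every $p_i$, hence is exact on $M_{0,n}$. Concretely, along $\delta_j$ a pull-back takes the same values on both sheets, so the outgoing and returning halves cancel. The formula $2\int_{z_{2j}}^{z_{2j+1}}\omega_j$ is valid only for forms that change sign under the sheet interchange (such as $dz/w$), not for pull-backs.

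Hence the true period map is $F\equiv 0$, and $\ker F=\text{Sol}_{(a,b,c)}=\text{Sol}_{(a,b)}\times\prod_i\text{Sol}_{c^i}$. Inside this product, $\{b=0\}$ and each $\{c^i=0\}$ are proper subspaces, as you observed. So a generic $\alpha$ avoids their finite union, and your completeness argument goes through with no perturbation of the slits. In fact the resulting surface is just $X\circ\pi$, so the hypothesis $4n-2p-3>0$ is never used.

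There are two smaller slips.
\begin{enumerate}
\item The real number of period constraints is $3p$ (two from the complex component and one from the real part, for each $j$), not $4p+p$. With $5p$ your claim that the real count leaves a kernel would fail, e.g.\ for $p=10$, $n=6$.
\item Near $E$ one has $ds^2\approx\frac14|G|^2|\Omega|^2=\frac14|\widetilde\omega_1|^2$, not $|G|^2|\widetilde\omega_2|^2=|\Omega|^2$. However, up to a constant the latter bounds $ds^2$ from below there, so your completeness conclusion is unaffected.
\end{enumerate}
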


We note that the branch points of $g$ lie outside the disk \( B \) while \( g^{-1}(S^1) \) lies inside the disk, and 
\( G \,=\, g \) when restricted to \( \pi^{-1}(B) \). Hence, if \( N \) is the cardinality of the connected 
components of the set \( \{z \,\,\big\vert\,\,\, |g(z)| \,=\, 1\} \), then the number of singular components of the 
maximal map as in Proposition \ref{prop:exsistece of maximal map} is at least \( 2N \).

Using the above, we will prove the following theorem.

\begin{theorem}\label{thm:genus,singularity}
For given \( p \) and \( N \,>\, p + 2 \), there exists a complete maximal map of genus \( p \) with \( 2N \) complete ends and having at least \( 2N \) singular components.
\end{theorem}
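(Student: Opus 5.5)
The plan is to apply Proposition \ref{prop:exsistece of maximal map} with $n = N$. The only input we still need is a meromorphic $g$ on $\mathbb{C}\cup\{\infty\}$ with exactly $N$ poles such that $\{|g|=1\}$ has exactly $N$ connected components. First I would take $N$ distinct points $p_1,\dots,p_{N-1}\in\mathbb{C}$, far apart, together with $p_N=\infty$, and set
\[
g(z) \;=\; \epsilon\Big(z+\sum_{i=1}^{N-1}\frac{1}{z-p_i}\Big)
\]
(or a similar rational function with simple poles exactly at the $p_i$), with $\epsilon>0$ small. Then $(g)_\infty=\sum_{i=1}^N p_i$, so $g$ has $n=N$ poles. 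The point of the choice is this: for suitable $\epsilon$ and spacing, $|g|>1$ on $N$ disjoint small discs $D_i$ around the $p_i$ (with $D_N$ a neighbourhood of $\infty$), and $|g|<1$ on the complement of their union. The level set $\{|g|=1\}$ is then the union of the $N$ boundary curves $\partial D_i$.

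The main obstacle is to verify this component count honestly. I would argue it via the maximum principle and the argument principle. Every component of $\{|g|>1\}$ contains a pole, because otherwise $1/g$ would be holomorphic there with $|1/g|=1$ on the boundary and $<1$ inside, which is impossible. Every component of $\{|g|<1\}$ contains a zero, by the same argument applied to $g$. Hence it suffices to choose the parameters so that the $N$ poles lie in distinct components of $\{|g|>1\}$ and the $N$ zeros lie in a single connected component of $\{|g|<1\}$. An explicit estimate handles this. Near $p_i$ we have $|g|\approx \epsilon/|z-p_i|$, which gives small near-circles of radius about $\epsilon$. Near $\infty$ we have $|g|\approx\epsilon|z|$, which gives a large near-circle of radius about $1/\epsilon$. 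In the annular region between these, $|g|<1$ provided the $p_i$ are contained in a disc of radius much smaller than $1/\epsilon$ and are separated by much more than $\epsilon$. Alternatively, one can use a small perturbation of a product like $g=\prod_i\big(\epsilon/(z-p_i)\big)\cdot(\cdots)$ and appeal to the stability of the regular level set $|g|=1$ (we can arrange that $dg\neq0$ on it), so that it consists of $N$ disjoint smooth loops. In either version we also need the branch points of $g$ to lie outside the disc $B$ that contains $g^{-1}(S^1)$ and the poles. This is compatible with the construction, since the points $z_1,\dots,z_{2p+2}$ used in Subsection \ref{subsec:Adding handels} are chosen freely outside $B$. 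So the only requirement is a disc $B$ containing all the level curves, and $B$ can be taken large.

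Next I check the numerical hypothesis. Proposition \ref{prop:exsistece of maximal map} needs $4n-2p-3>0$. With $n=N>p+2$ we get $4N-2p-3>4p+8-2p-3=2p+5>0$, so the hypothesis holds with room to spare. The proposition then yields a complete maximal map of genus $p$ on $M_{p,2N}$ with $2N$ complete ends, whose Gauss map $G$ is the lift of $g$.

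Finally, by the remark following the proposition, $G=g\circ\pi$ on $\pi^{-1}(B)$, and $\pi^{-1}(B)$ consists of two disjoint sheets because no branch points lie in $B$. Each of the $N$ loops of $\{|g|=1\}$ therefore lifts to two disjoint loops in $M_{p,2N}$, giving at least $2N$ singular components. Outside $\pi^{-1}(B)$ we have $|G|\neq1$, so no further level components interfere with the count. This proves Theorem \ref{thm:genus,singularity}.
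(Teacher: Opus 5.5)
Your proof is correct and has the same skeleton as the paper's proof:
\begin{itemize}
\item choose $g$ on $\mathbb{C}\cup\{\infty\}$ with $N$ poles whose unit level set has at least $N$ components;
\item check $4N-2p-3>0$ and apply Proposition~\ref{prop:exsistece of maximal map} with $n=N$;
\item double the count, since $\pi^{-1}(B)$ is two disjoint copies of $B$ on which $G=g\circ\pi$.
\end{itemize}

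The difference is in how $g$ is produced, and there your version is the sounder one. The paper only asks for $N$ distinct poles with $g'\neq 0$ on $g^{-1}(S^1)$, and asserts that this forces at least $N$ components. That is not true in general. For small $\delta>0$, the function $g=1/(z^N-\delta)$ has $N$ simple poles, and its critical points $0$ and $\infty$ satisfy $|g(0)|=1/\delta$ and $g(\infty)=0$, so $g'\neq 0$ on $g^{-1}(S^1)$. Yet $\{|g|=1\}=\{|z^N-\delta|=1\}$ is a single Jordan curve, because all the poles lie in one component of $\{|g|>1\}$. A M\"obius change of variable putting one pole at $\infty$ changes nothing. Forcing the poles into distinct components of $\{|g|>1\}$, as your choice $g=\epsilon\bigl(z+\sum_i 1/(z-p_i)\bigr)$ does, is exactly the ingredient the count needs.

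For the theorem you only need at least $N$ components, and this already follows from your estimates, with no zero-counting or regularity. The set $\{|g|=1\}$ lies in $N$ pairwise disjoint thin annuli, one around each finite $p_i$ and one near $|z|=1/\epsilon$, and these are separated by $\{|g|<1\}$. Each annulus contains the nonempty boundary of the component of $\{|g|>1\}$ containing its pole, so each contributes at least one component of $\{|g|=1\}$.

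Finally, since $p_N=\infty\in B$, taking $B$ ``large'' really means the complementary disc, where the $z_k$ and the slits go, is a small disc inside $\{|g|<1\}$. Your $g$ provides such a disc.
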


\begin{proof}
We start with \( N \) and \( p \) as given. We find a meromorphic function \( g \) on \(\mathbb{C} \cup \{\infty\}\) such that it
has \( N \) poles and \(\{z \,\,\big\vert\,\, g'(z)\, =\, 0\} \cap g^{-1}(S^1) \,=\, \emptyset\).

Since \( g \) has \( N \) distinct poles, the degree of \( g \) is at least \( N \). Furthermore, since $g^\prime(z)\,\neq\, 0$ on the
set $g^{-1}(S^1)$, it follows that  \( g^{-1}(S^1) \) has at least \( N \) connected components.

Since \( N-p-2\,>\,0\), we have $4N-2p-3\,>\,0$. Therefore, with $g$ as in the previous paragraph, we can construct a maximal map as
described in Proposition \ref{prop:exsistece of maximal map}. This maximal map will be complete and will have at least \( 2N \)
singular components.
\end{proof}

Following are a few remarks on the maximal maps described in Theorem \ref{thm:genus,singularity}.

\begin{remark}\mbox{}
\begin{enumerate}
\item The maximal map will have branched singularities at the locations where \(\Omega\) is zero.

\item The maximal map  will have \(2N\) complete ends. Therefore, our construction increases the singularity set as the number of complete ends increases.

\item Since the solution space \(V\) for our maximal map is of dimension at least \(4n-2p-3\), there is a large class of such maximal maps. While not all these maximal maps will necessarily be maxfaces, there is a possibility of deforming the Gauss map of these maximal maps to achieve a maxface.
\end{enumerate}
\end{remark}

The above construction leaves open a significant freedom to add handles or create embedded ends. Our next goal 
is to construct a maximal map of higher genus with an arbitrary number of simple ends and a large number of 
singular components. This involves deforming the earlier data and adding simple ends rather than creating 
holes (i.e., adding embedded ends).

\subsection{Higher genus maximal maps with an arbitrary number of simple ends and a prescribed number of 
singular components.}\label{subsec:Simpleends}

Let \( g \) be a meromorphic function as defined in \eqref{gaussmap1}. Recall that \( B \) is the disk in $\mathbb{C}\cup\{\infty\}$ and it contains the set \( \{ p_i\,\,\big\vert\,\, i \,=\, 1, \,\cdots,\, n \} \). We choose \( q_j
\,\in\, B \), where \( 1 \,\leq\, j \,\leq\, m \), that are distinct from \(\{ p_i\}_{i=1}^n \)s.

We aim to find the Weierstrass data \((G, \,\Omega)\) for the complete maximal map on \(\mathbb{C} \cup \{\infty\} \setminus 
\pi^{-1}\left(\{p_1,\, \cdots,\, p_n,\, q_1,\, \cdots,\, q_m\}\right)\), where the points of \(E\, =\, \pi^{-1}(\{p_1,\, \cdots,\, p_n\})\)
are complete ends 
and the points of \(F\,:=\, \pi^{-1}(\{q_1,\,\cdots,\, q_m\})\) are simple (in particular, embedded) ends. This works as in the proof
of Proposition \ref{prop:exsistece of maximal map} with only one difference.

The only difference is that instead of the expression \(f(z) + \sum_i f_i(z)\), we
need to take \[\frac{f(z) + \sum_i f_i(z)}{\prod_j (z - 
q_j)^2}.\]

With these new expressions and by employing a similar technique as discussed in the proof of Proposition 
\ref{prop:exsistece of maximal map}, we can reach the conclusion that if \(4n - 2p - m - 3 \,>\, 0\), then there exists a 
maximal map of genus \(p\) with \(2n\) complete ends and \(2m\) simple ends, and with singular components at 
least twice the number of the connected components of the set \(\{z\,\,\big\vert\,\,\, |z| \,=\, 1\}\).

Furthermore, similar to Theorem \ref{thm:genus,singularity}, we have the following theorem:

\begin{theorem}\label{thm:simple and Complete ends}
Given \(p\), \(m\), and \(N\) such that \(N - \lceil \frac{m}{2} \rceil \,>\, p + 2\), there exists a complete maximal map of
genus \(p\) with \(2N\) complete ends and \(2m\) simple ends, and with at least \(2N\) singular components.
\end{theorem}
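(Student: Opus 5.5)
The plan is to mirror the proof of Theorem \ref{thm:genus,singularity}, replacing Proposition \ref{prop:exsistece of maximal map} by its simple-end variant from Subsection \ref{subsec:Simpleends}. That variant requires
\[
4n-2p-m-3\,>\,0,
\]
where \(n\) is the number of poles of \(g\). We take \(n\,=\,N\).

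\textbf{Step 1: choose \(g\).} First choose a meromorphic function \(g\) on \(\mathbb{C}\cup\{\infty\}\) with exactly \(N\) distinct poles \(p_1,\,\cdots,\,p_N\) (with \(p_N\,=\,\infty\)). We also require \(g'\,\neq\, 0\) on \(g^{-1}(S^1)\). For example, take \(g\) a generic rational function with simple poles and replace it by \(t\,g\) for a generic \(t\,>\,0\). This works because the critical values of \(g\) are finitely many, so for generic \(t\) none of them lies on \(t^{-1}S^1\).

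As in the proof of Theorem \ref{thm:genus,singularity}, \(g^{-1}(S^1)\) is then a union of smooth closed curves. Each pole \(p_i\) lies in its own component of \(\{|g|\,>\,1\}\) bounded by such curves, so \(g^{-1}(S^1)\) has at least \(N\) components. Next choose the disk \(B\) containing \(g^{-1}(S^1)\) and the \(p_i\). Choose \(m\) distinct points \(q_1,\,\cdots,\,q_m\,\in\, B\setminus\{p_i\}\), and additionally require \(|g(q_j)|\,\neq\,1\) and \(g(q_j)\,\neq\,0\). Then the ends over the \(q_j\) are not singular, and \(\Omega\) has the intended order-\(2\) behavior there.

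\textbf{Step 2: dimension count.} Run the construction with
\[
\frac{f+\sum_i f_i}{\prod_j (z-q_j)^2}.
\]
The residue conditions at each \(q_j\) add new linear constraints: for each \(j\), the residues of the three forms \(\omega_1,\omega_2,\omega_3\) at \(q_j\) must vanish. This cuts the dimension of the solution space by roughly \(m\) after accounting for the symmetric contributions, which is the paper's count \(4n-2p-m-3\). I would recheck this count carefully, since the residue at \(q_j\) is one constraint per form. Then \(\ker F\) is nontrivial as long as this number is positive.

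With \(n\,=\,N\), the hypothesis \(N-\lceil m/2\rceil\,>\,p+2\) gives \(N\,\geq\, p+3+\lceil m/2\rceil\). Hence
\[
4N-2p-m-3\,\geq\, 4p+12+4\lceil m/2\rceil-2p-m-3\,=\,2p+9+4\lceil m/2\rceil-m\,>\,0.
\]
The hypothesis is therefore stronger than needed, which leaves room for the extra conditions below.

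\textbf{Step 3: conditions at the ends.} We must check that a nonzero \(\alpha\) in the kernel yields the right behavior at every end.

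\emph{Complete ends.} Each point of \(E\) needs a pole of order at least \(2\). This comes from a nonzero \(c^i\), and the conditions \(c^i\,\neq\,0\) define open dense subsets. Since the kernel has large dimension, a generic element of it satisfies all of them, provided the kernel is not contained in any of the hyperplane-type loci \(c^i\,=\,0\). This genericity is the main obstacle. I would handle it by the dimension surplus: the loci \(c^i\,=\,0\) have codimension at least \(3\) inside \(\mathrm{Sol}_{(a,b,c)}\), and \(\dim\ker F\) exceeds \(2p\)-type deficits, so I expect the intersection of the kernel with each such locus to be a proper subspace.

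\emph{Simple ends.} Each point of \(F\) must have poles of order at most \(2\) in \(g\Omega\), \(g^{-1}\Omega\) and \(\Omega\). This holds automatically from the factor \((z-q_j)^{-2}\), since \(g\) is holomorphic and nonzero at \(q_j\). To make the ends genuinely order \(2\), we need \(f+\sum f_i\) nonvanishing at \(q_j\), which is again a generic condition.

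\textbf{Step 4: conclusion.} Completeness follows from Proposition \ref{defn:complete-criteria}, because \(|G|\,\neq\,1\) at every puncture. The singular count follows as before: \(G\,=\,g\circ\pi\) on the two sheets over \(B\), so each component of \(g^{-1}(S^1)\) lifts to two components. This gives at least \(2N\) singular components.
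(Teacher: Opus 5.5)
Your route is the one the paper intends: take $n=N$, invoke the simple-end variant of Proposition \ref{prop:exsistece of maximal map}, and check $4N-2p-m-3>0$, which you do correctly. The gap is that the simple ends do not survive the conditions you impose on them.

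\textbf{The ends over the $q_j$ are removable.} You take the $q_j$ generic and require the residues of $h\,dz$, $gh\,dz$, $g^2h\,dz$ to vanish at $q_j$, where $h=(f+\sum_i f_i)/\prod_k(z-q_k)^2$. Near $q_j$ write $h=\psi(z)/(z-q_j)^2$ with $\psi$ holomorphic, so $\psi(q_j)=0$ exactly when $(f+\sum_i f_i)(q_j)=0$. Then
\[
\operatorname{Res}_{q_j}(h\,dz)=\psi'(q_j),\qquad \operatorname{Res}_{q_j}(gh\,dz)=g'(q_j)\psi(q_j)+g(q_j)\psi'(q_j).
\]
The third residue equals $2g(q_j)\operatorname{Res}_{q_j}(gh\,dz)-g(q_j)^2\operatorname{Res}_{q_j}(h\,dz)$, so the conditions amount to $\psi'(q_j)=0=g'(q_j)\psi(q_j)$. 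At a generic point $g'(q_j)\neq 0$, hence $\psi(q_j)=0$. Then $h$ has at most a simple pole with zero residue, i.e.\ it is holomorphic at $q_j$.

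So the nonvanishing of $f+\sum_i f_i$ at $q_j$, which you call generic in Step 3, fails on the \emph{whole} solution space. $\Omega$, $G\Omega$, $G^{-1}\Omega$ extend holomorphically over $\pi^{-1}(q_j)$, and $ds^2$ extends across these punctures. They lie at finite distance, so the map is not complete and does not have the $2m$ ends; Step 4 inherits this.

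\textbf{The Step 2 count.} The same computation shows the count is off. A general $q_j$ costs two independent conditions, and these kill the end; only when $g'(q_j)=0$ does it cost the single condition $\psi'(q_j)=0$. Moreover, dividing by $\prod_k(z-q_k)^2$ destroys the automatic vanishing of $\operatorname{Res}_{p_i}(g^{-1}\omega)$ that System $A$ relied on. With the paper's fixed numbers of unknowns, the naive dimension bounds for System $A$ and the $B_i$ turn negative as $m$ grows. So $4n-2p-m-3$ is not actually derived; the paper's sketch is silent on both points.

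\textbf{A repair.} Put the $q_j$ at critical points of $g$. A generic $g$ of degree $N$ has $2N-2$ distinct ones, none of them zeros, and $2N-2>m$ under your hypothesis. There $\psi(q_j)\neq 0$ is no longer forced. Also enlarge the ansatz (higher pole orders at the $p_i$, higher polynomial degree). This costs nothing, since the number of linear conditions does not grow with the number of unknowns.

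\textbf{The component count in Step 1 does not follow.} $g'\neq 0$ on $g^{-1}(S^1)$ only makes $g^{-1}(S^1)\to S^1$ a covering, so there are between $1$ and $\deg g$ components, and several poles can share a component of $\{|g|>1\}$. For instance, let $B$ be a Blaschke product of degree $N$ with distinct zeros. Then $g=1/B$ has $N$ simple poles and $g'\neq0$ on $g^{-1}(S^1)=\{|z|=1\}$, which is connected. This persists for $tg$ with $t$ near $1$ and for small perturbations of $g$, so genericity does not help.

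Take $t>0$ \emph{small} instead. Then every finite critical value of $tg$ lies in the open unit disk. Since the poles are simple, $tg$ is an unbranched covering over the closed disk $\{|w|\geq 1\}\cup\{\infty\}$. Its preimage is therefore $N$ disjoint closed disks, one around each pole, so $(tg)^{-1}(S^1)$ has exactly $N$ components. This choice also gives the critical points used above values of modulus $<1$. The proof of Theorem \ref{thm:genus,singularity} makes the same unjustified inference.

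\textbf{Step 3.} Your codimension argument does not exclude $\ker F\subset\{c^i=0\}$ once $p\geq 2$, but it is not needed. Each $\delta_j$ can be chosen so that $\pi\circ\delta_j$ is a closed loop in the disk $(\mathbb{C}\cup\{\infty\})\setminus B$, where the $\omega_i$ are holomorphic. Hence $\int_{\delta_j}\widetilde{\omega}_i=\int_{\pi\circ\delta_j}\omega_i=0$ and $F\equiv 0$. The paper's formula $\int_{\delta_j}\widetilde{\omega}_i=2\int_{z_{2j}}^{z_{2j+1}}\omega_i$ holds for forms that are odd under the sheet interchange, not for pullbacks.
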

\section{Concluding Remarks}\label{sec:conclusion}

In the previous section, we constructed a family of complete maximal maps in the Lorentz-Minkowski space $\mathbb{E}^3_1$, of arbitrary genus \(p\), whose nondegenerate singular sets consist of an arbitrarily large number of connected components. The main results, Theorems~\ref{thm:genus,singularity} and \ref{thm:simple and Complete ends}, establish that for any given genus \(p\) and any sufficiently large integer \(N\), there exists a complete maximal map of genus \(p\) with at least \(2N\) connected singular loops, along with a large number of ends, including both complete and simple ends. However, for a given maximal map in this family, while we have a guaranteed lower bound on the number of nondegenerate singular components, a precise lower count remains unknown.

Empirically, in every known example of a maxface in the literature, one observes that a singular loop appears in each handle, and the immersion fails around the handle region. Figure~\ref{fig:singularity aroundneck} illustrates this phenomenon: part (A) shows a segment of the toroidal maxface of Kim–Yang \cite{Kim2006}, while part (B) depicts a portion of a maxface from the Lorentzian Costa–Hoffman–Meeks (CHM) family \cite{chen2024singularitiesmaxfacesconstructednodeopening}. In both cases, the singularities wrap around the necks of the surface, indicating a breakdown of immersion in these regions.

\begin{figure}[H]
    \centering
      \begin{subfigure}[b]{0.35\textwidth}
        \includegraphics[scale=0.32]{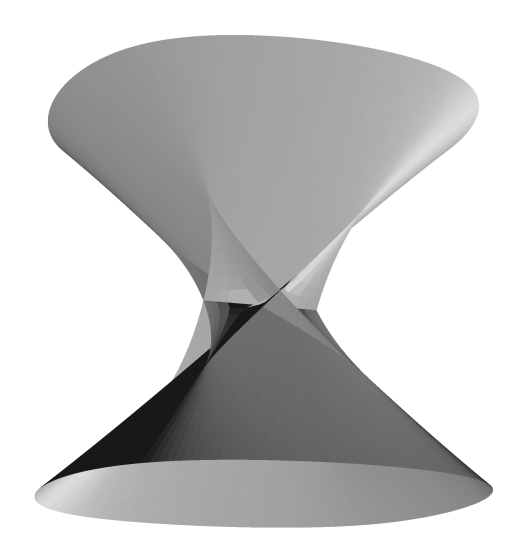}
        \caption{Kim–Yang's toroidal maxface \cite{Kim2006}}
    \end{subfigure}\;\;\;\;\;\;\;\;\;\;\;\;\;\;\;
    \begin{subfigure}[b]{0.35\textwidth}
        \includegraphics[scale=0.4]{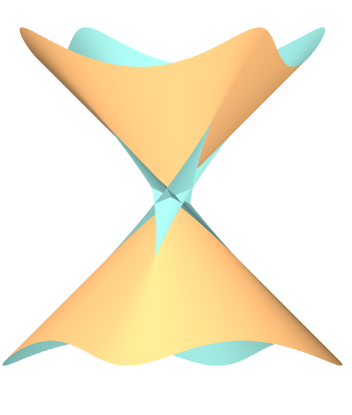}
        \caption{Lorentzian CHM maxface (partial) \cite{chen2024singularitiesmaxfacesconstructednodeopening}}
    \end{subfigure}
    \caption{Singularities wrapping around neck regions}
    \label{fig:singularity aroundneck}
\end{figure}

\begin{figure}[H]
    \centering
    \includegraphics[scale=0.4]{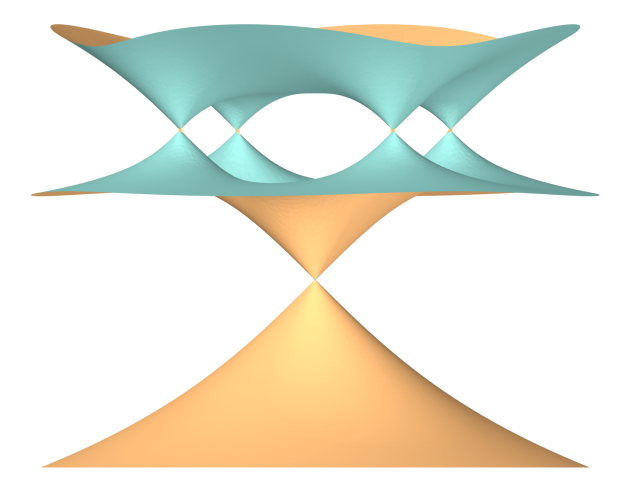}
    \caption{Lorentzian Costa–Hoffman–Meeks maxface \cite{chen2024singularitiesmaxfacesconstructednodeopening}}
    \label{Fig:Full CHM surface}
\end{figure}

This behavior is persistent across all presently known constructions. It suggests a possible dependence of the number of singular components on the genus of the surface. In view of our constructions and empirical observations summarized in Table~\ref{Table of singular components}, it is plausible that the number of connected singular components might be bounded below by \(p+1\). 
\section{Acknowledgment}
I. Biswas is partially supported by a J. C. Bose Fellowship (JBR/2023/000003).\;\;S. Paul is supported by UGC JRF (Beneficiary Code: BININ01854128).

\bibliography{biblio.bib}
\end{document}